\documentclass[12pt]{article}
\usepackage[utf8]{inputenc}
\usepackage{amsthm}
\usepackage[margin=1.5in]{geometry}
\usepackage{todonotes}

\usepackage{hyperref, xcolor}
\usepackage{scrextend}
\hypersetup{
    colorlinks=true,
    urlcolor=magenta,
}
\usepackage{academicons}
\definecolor{orcidlogocol}{HTML}{A6CE39}
\usepackage{amsmath,amssymb,amsfonts,oldlfont,graphics,oldgerm,latexsym,dsfont}
\usepackage{graphics}
\usepackage{pstricks,pst-node,pst-arrow}
\usepackage{latexsym,amssymb}
\foreach \x in {A, ..., Z}{%
	\expandafter\xdef\csname orcid\x\endcsname{\noexpand\href{https://orcid.org/\csname orcidauthor\x\endcsname}{\noexpand\orcidicon}}
}

\usepackage{enumerate}
\usepackage{enumitem}

\PassOptionsToPackage{usenames,dvipsnames,svgnames}{xcolor}
\usepackage{float}

\newtheorem{theorem}{Theorem}

\newtheorem{proposition}[theorem]{Proposition}
\newtheorem{observation}[theorem]{Observation}
\newtheorem{conjecture}[theorem]{Conjecture}
\usepackage{mathtools}
\DeclarePairedDelimiter{\ceil}{\lceil}{\rceil}
\DeclarePairedDelimiter\floor{\lfloor}{\rfloor}
\usepackage{amssymb}

\newcommand{\fsG}{\overleftrightarrow{G}}

\usepackage{graphics}

\providecommand{\keywords}
{
  \small	
  \noindent \textbf{Keywords:} symmetry breaking; symmetric digraphs
}

\providecommand{\msc}
{
  \small	
  \noindent \textbf{Mathematics Subject Classification:} 05C15, 05C20, 05C25
}

\title{Distinguishing symmetric digraphs by proper arc-colourings of type I}
\author{Rafał Kalinowski, Monika Pilśniak, Magdalena Prorok\\
\small AGH University of Krakow\\
\small al. Mickiewicza 30, 30-059 Krakow, Poland\\
\small\tt {kalinows, pilsniak, prorok}@agh.edu.pl}

\begin{document}

\maketitle

\begin{abstract}
A symmetric digraph $\overleftrightarrow{G}$ is obtained from a simple graph $G$ by replacing each edge $uv$ with a pair of opposite arcs $\overrightarrow{uv}$, $\overrightarrow{vu}$. An arc-colouring $c$ of a digraph $\overleftrightarrow{G}$ is distinguishing if the only automorphism of $\overleftrightarrow{G}$ preserving the colouring $c$ is the identity. Behzad introduced the proper arc-colouring of type I as an arc-colouring such that any two consecutive arcs $\overrightarrow{uv}$, $\overrightarrow{vw}$ have distinct colours. We establish an optimal upper bound $\lceil 2\sqrt{\Delta(G)}\rceil$ for the least number of colours in a distinguishing proper colouring of type I of a connected symmetric digraph $\overleftrightarrow{G}$. Furthermore, we prove that the same upper bound $\lceil 2\sqrt{\Delta(G)}\rceil$ is optimal for another type of proper colouring of $\overleftrightarrow{G}$, when only monochromatic 2-paths are forbidden. 
\end{abstract}

\keywords

\msc

\section{Introduction}
We use standard graph theory terminology and notation.
An edge-colouring $c$ of a~graph $G$ is called {\it distinguishing} if the identity is the only automorphism preserving~$c$. 
In 2015, Kalinowski and Pilśniak~\cite{KP} introduced the {\it distinguishing chromatic index} $\chi'_D(G)$ of a graph $G$ as the least number of colours in  a proper distinguishing edge-colouring of $G$. In particular, they proved that $\Delta(G) \leq \chi'_D(G) \leq \Delta(G)+1$ for every connected graph $G$ of order $|G|\geq 3$ except for four graphs of small order $K_4$, $C_4$, $C_6$, $K_{3,3}$.

By $\overleftrightarrow{G}$ we denote a {\it symmetric digraph} obtained from a simple graph $G$ by replacing each edge $uv$ by a pair of opposite arcs $\overrightarrow{uv}$ and $\overrightarrow{vu}$. 
The concept of distinguishing edge-colourings of graphs can be naturally extended to arc-colouring of digraphs. This problem is particularly interesting for symmetric digraphs since the automorphism group of a symmetric digraph $\overleftrightarrow{G}$ coincides with the automorphism group of the underlying graph $G$. Note that, in general, the automorphism group of a digraph is a subgroup of the automorphism group of the underlying graph. Let us note that distinguishing arc-colourings of digraphs were studied in a different context by Meslem and Sopena~\cite{MS}.

A definition of proper arc-colouring of a digraph depends on a definition of adjacency of arcs. There are 15 possible definitions of a proper arc-colouring of a~digraph since there are 15 possible definitions of adjacency of two arcs corresponding to non-empty forbidden monochromatic subsets of the set of the four digraphs: $2$-cycle $A_1$, $2$-path $A_2$, source $A_3$ and sink $A_4$ (see Figure~\ref{4Ai}).

\vspace{7mm}


\begin{figure}[htb]\label{4Ai}
\centering
\includegraphics[width=8cm]{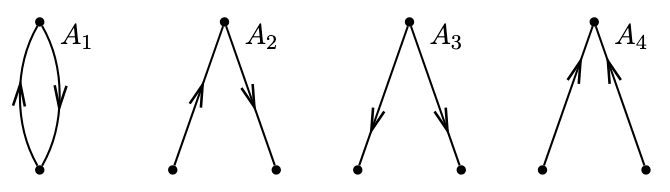}
\caption{Four weakly connected digraphs with two arcs}
\end{figure}


In this paper, we investigate distinguishing proper arc-colouring for two of these definitions, all other 13 ones have already been investigated in \cite{digrafyI} and \cite{KPP}. One of these two definitions has already been studied in the literature. An arc-colouring of a~digraph is called {\it proper of type I} if there are neither monochromatic 2-cycles nor monochromatic 2-paths. We denote by $\chi'_{1,2}(\overleftrightarrow{G})$ the  chromatic index of type~I of a~symmetric digraph $\overleftrightarrow{G}$, i.e. the least number of colours in a proper arc-colouring of $\overleftrightarrow{G}$ of type I. We also use the notation $\chi'_{2}(\overleftrightarrow{G})$ for another chromatic index of $\fsG$ when only monochromatic $2$-paths are forbidden. 

A proper colouring of type I in digraphs was introduced by Behzad~\cite{Behzad} in 1965, and then investigated by Harner and Entringer~\cite{HE}. Poljak and R\"odl~\cite{PR} proved the following notable result in 1981. 
\begin{theorem} {\rm (\cite{PR})}\label{PRodl}
For every graph $G$
$$\chi'_{1,2}(\overleftrightarrow{G})=\min\left\{k\;:\;\chi(G)\leq{k\choose \lfloor k/2\rfloor}\right\},$$
where $\chi(G)$ is the chromatic number of the underlying graph $G$.
\end{theorem}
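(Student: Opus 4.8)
The plan is to recast a proper arc-colouring of type~I as a vertex-colouring of $G$ by subsets of the colour set, and then to obtain both inequalities from Sperner's theorem together with a chain decomposition of the Boolean lattice.

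The first step is a reformulation. For an edge $uv$ of $G$, the digraph $\overleftrightarrow{G}$ contains the arc $\overrightarrow{uv}$ entering $v$ and the arc $\overrightarrow{vu}$ entering $u$. Given an arc-colouring $c$ with colours in $[k]$, I would attach to each vertex $v$ its \emph{in-palette} $S_v=\{c(\overrightarrow{uv}):u\sim v\}$, the set of colours on arcs entering $v$. A colour shared by an arc $\overrightarrow{uv}$ entering $v$ and an arc $\overrightarrow{vw}$ leaving $v$ yields a monochromatic $2$-cycle (if $u=w$) or a monochromatic $2$-path (if $u\ne w$), and conversely every monochromatic $2$-cycle or $2$-path arises this way; hence $c$ is proper of type~I if and only if at every vertex $v$ the colours on entering arcs are disjoint from the colours on leaving arcs. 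The key consequence I would extract is: if $u\sim v$, then $c(\overrightarrow{uv})$ lies in $S_v$ but, being a colour on an arc leaving $u$, lies outside $S_u$, and symmetrically $c(\overrightarrow{vu})\in S_u\setminus S_v$; so for adjacent $u,v$ the subsets $S_u,S_v\subseteq[k]$ are incomparable.

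For the bound $\chi'_{1,2}(\overleftrightarrow{G})\le k$ when $\chi(G)\le\binom{k}{\lfloor k/2\rfloor}$, I would take a proper vertex-colouring of $G$ and compose it with an injection of its colour classes into the family of $\lfloor k/2\rfloor$-subsets of $[k]$, which has exactly $\binom{k}{\lfloor k/2\rfloor}$ elements by Sperner's theorem. This produces an assignment $v\mapsto A_v$ of equal-sized subsets with $A_u\ne A_v$ — hence $A_u,A_v$ incomparable and $A_v\setminus A_u\ne\emptyset$ — whenever $u\sim v$. Colouring each arc $\overrightarrow{uv}$ by an arbitrary element of $A_v\setminus A_u$ then forces the colours entering $v$ into $A_v$ and the colours leaving $v$ into $[k]\setminus A_v$, so this is a proper type~I colouring using at most $k$ colours.

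For the reverse bound, I would start from an arbitrary proper type~I arc-colouring with colour set $[k]$, form the in-palettes $S_v$, and use that the largest antichain in $2^{[k]}$ has size $\binom{k}{\lfloor k/2\rfloor}$ (Sperner), so by Dilworth's theorem — or by the symmetric chain decomposition of de Bruijn--Tengbergen--Kruyswijk — the Boolean lattice $2^{[k]}$ partitions into $m=\binom{k}{\lfloor k/2\rfloor}$ chains. Sending each vertex $v$ to the index of the chain containing $S_v$ is then a proper vertex-colouring of $G$: vertices with the same index have comparable in-palettes and so are non-adjacent by the observation above. Hence $\chi(G)\le m$, so every $k$ for which a proper type~I colouring with $k$ colours exists satisfies $\chi(G)\le\binom{k}{\lfloor k/2\rfloor}$; together with the previous paragraph, and the (routine) monotonicity of $\binom{k}{\lfloor k/2\rfloor}$ in $k$, this gives the claimed equality. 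I expect the only genuinely substantive point to be the observation in the first step — that type~I forces incomparability of in-palettes at adjacent vertices — coupled with the realization that a minimum chain decomposition of $2^{[k]}$, whose size is governed by Sperner, converts exactly this incomparability into a proper vertex-colouring; everything else is bookkeeping, and the two directions are nearly mirror images.
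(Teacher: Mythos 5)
Your argument is correct. Note, though, that the paper does not prove this statement at all: it is quoted as a known result of Poljak and R\"odl \cite{PR}, so there is no internal proof to compare against. Your reduction --- type~I properness at a vertex $v$ is equivalent to the in-colours and out-colours at $v$ being disjoint, hence adjacent vertices have incomparable in-palettes; conversely, assigning incomparable subsets $A_v$ to the colour classes of a proper vertex-colouring and colouring $\overrightarrow{uv}$ from $A_v\setminus A_u$ yields a type~I colouring --- is exactly the classical mechanism behind the theorem, and both directions (injecting colour classes into the middle layer of $2^{[k]}$, and colouring $G$ by the chains of a minimum chain partition of $2^{[k]}$, whose size is $\binom{k}{\lfloor k/2\rfloor}$ by Dilworth plus Sperner) are sound; the final bookkeeping deducing the stated equality from the two inequalities is also fine. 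One cosmetic slip: the fact that there are $\binom{k}{\lfloor k/2\rfloor}$ sets of size $\lfloor k/2\rfloor$ is just counting, not Sperner's theorem --- Sperner is genuinely needed only in the lower-bound direction, to bound the number of chains.
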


Analogously, $\chi'_{D_{1,2}}(\overleftrightarrow{G})$ and $\chi'_{D_{2}}(\overleftrightarrow{G})$  stand for the distinguishing chromatic indices of~$\overleftrightarrow{G}$, i.e. the least number of colours in a~distinguishing proper arc-colouring, where the indicated two-arc digraphs cannot be monochromatic.

\vspace{3mm}
In Section~\ref{sec:S2}, we discuss distinguishing proper colouring of type I of symmetric digraphs. 
There, we formulate our main result, Theorem~\ref{thm:main}, which provides an~optimal upper bound for the distinguishing chromatic index $\chi'_{D_{1,2}}(\overleftrightarrow{G})$. Its proof is given in Section \ref{proof}. 
Finally in Section \ref{Slast}, we discuss proper arc-colourings, where only monochromatic 2-paths are forbidden.

We restrict our investigations of distinguishing colouring to connected graphs $G$ to avoid dealing with isomorphic components.

Given an arc-colouring of $\fsG$, we write that a vertex $v$ is {\it fixed}, if $v$ is a fixed point of every automorphism of $\fsG$ preserving this colouring.


\section{Distinguishing chromatic index \texorpdfstring{$\chi'_{D_{1,2}}(\fsG)$}{Lg}}\label{sec:S2}

In this section, we investigate the distinguishing chromatic index $\chi'_{D_{1,2}}(\fsG)$. Clearly, for every connected symmetric digraph $\overleftrightarrow{G}$ we have $ \chi'_{D_{1,2}} (\overleftrightarrow{G})\geq \chi'_{1,2} (\overleftrightarrow{G})$. First, we show that the equality is achieved for complete graphs.

\begin{proposition}\label{chi12}
Every proper arc-colouring of type I of a complete symmetric digraph $\overleftrightarrow{K_n}$ is distinguishing. Hence, $$\chi'_{D_{1,2}}(\overleftrightarrow{K_n}) = \chi'_{1,2}(\overleftrightarrow{K_n})  = \min \left\{k: n\leq \binom{k}{\floor{\frac{k}{2}}} \right\}.$$
\end{proposition}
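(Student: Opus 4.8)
The plan is to reduce distinguishing to an injectivity statement about a natural vertex invariant induced by the arc-colouring. For a proper arc-colouring $c$ of type I of $\overleftrightarrow{K_n}$ and a vertex $v$, let $c^-(v)=\{c(\overrightarrow{uv}):u\neq v\}$ and $c^+(v)=\{c(\overrightarrow{vu}):u\neq v\}$ be the sets of colours appearing on the arcs entering, respectively leaving, $v$. The first step is to observe that $c^-(v)\cap c^+(v)=\emptyset$ for every $v$: if a colour $\alpha$ occurred both as $c(\overrightarrow{xv})$ and as $c(\overrightarrow{vy})$, then either $x\neq y$ and $x\to v\to y$ is a monochromatic $2$-path, or $x=y$ and $\overrightarrow{xv},\overrightarrow{vx}$ form a monochromatic $2$-cycle; both configurations are forbidden in a colouring of type I.

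The second, and key, step is to show that $v\mapsto c^-(v)$ is injective. Let $u\neq v$. Since $\overleftrightarrow{K_n}$ is complete, the arc $\overrightarrow{uv}$ exists, so $c(\overrightarrow{uv})\in c^-(v)$; on the other hand $c(\overrightarrow{uv})\in c^+(u)$, hence by the first step $c(\overrightarrow{uv})\notin c^-(u)$. Therefore $c^-(u)\neq c^-(v)$. (Equivalently, $v\mapsto c^-(v)$ is a proper vertex-colouring of $K_n$, hence injective, which is precisely the mechanism behind Theorem~\ref{PRodl}.)

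The third step finishes the argument: if $\varphi$ is an automorphism of $\overleftrightarrow{K_n}$ preserving $c$, then for each $v$ it maps the in-arcs of $v$ bijectively and colour-preservingly onto the in-arcs of $\varphi(v)$, so $c^-(\varphi(v))=c^-(v)$; by the second step this forces $\varphi(v)=v$. Thus the identity is the only colour-preserving automorphism, i.e.\ every proper arc-colouring of type I of $\overleftrightarrow{K_n}$ is distinguishing. Consequently $\chi'_{D_{1,2}}(\overleftrightarrow{K_n})=\chi'_{1,2}(\overleftrightarrow{K_n})$, and the closed form follows from Theorem~\ref{PRodl} applied to $G=K_n$ together with $\chi(K_n)=n$.

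The argument is short and no step is a genuine obstacle; the only point requiring care is the case $x=y$ in the first step, where one must invoke the absence of monochromatic $2$-cycles rather than of $2$-paths. The trivial cases $n=1,2$ can be checked directly, although the argument above already covers all $n\geq 2$.
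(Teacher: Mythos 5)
Your proof is correct and rests on the same mechanism as the paper's: in a type-I colouring the in-colour and out-colour sets of a vertex are disjoint, so the colour of $\overrightarrow{uv}$ (which exists by completeness) separates $u$ from $v$. You merely package this as injectivity of the invariant $v\mapsto c^-(v)$, whereas the paper argues directly that an automorphism sending $u$ to $v$ would force a monochromatic $2$-path or $2$-cycle through $v$; the content is the same.
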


\begin{proof}
Let $c$ be any proper arc-colouring of type I of $\overleftrightarrow{K_n}$. Suppose $c$ is not distinguishing. Therefore, there exists a non-trivial automorphism  $\varphi$ preserving the colouring $c$. Let $u$ and $v$ be two distinct vertices of $\overleftrightarrow{K_n}$ such that $\varphi(u) = v$. If the arc $\overrightarrow{uv}$ is coloured with $\alpha$, then there must exist an arc $\overrightarrow{vw}$ of colour $\alpha$ outgoing from the vertex $v$. However, this would create a monochromatic $2$-path if $w\neq u$, or a monochromatic 2-cycle if $w=u$, a contradiction.
\end{proof}

Another example of a graph $G$ that achieves the equality $\chi'_{D_{1,2}} (\overleftrightarrow{G})=\chi'_{1,2}(\overleftrightarrow{G})$ is an odd cycle. We now determine the distinguishing chromatic index $\chi'_{D_{1,2}}$ of any path and cycle, which will be useful in the proof of the main theorem.

\begin{observation}\label{cycles}
For symmetric directed paths $\overleftrightarrow{P_n}$ we have $\chi'_{D_{1,2}}(\overleftrightarrow{P_{2k}}) = 2$ and $\chi'_{D_{1,2}}(\overleftrightarrow{P_{2k+1}}) = 3$.
For every symmetric directed cycle $\overleftrightarrow{C_n}$ with $n \geq 3$, we have $\chi'_{D_{1,2}}(\overleftrightarrow{C_n}) = 3$.
\end{observation}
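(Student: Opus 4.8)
The plan is to handle paths and cycles separately, in each case first recalling the relevant chromatic index of type~I (the lower bound) and then exhibiting an explicit colouring that is both proper of type~I and distinguishing.

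For the paths, I would first observe that $\chi'_{1,2}(\overleftrightarrow{P_n})$ equals $2$ when $n$ is even and $2$ when $n$ is odd as well (since $\chi(P_n)=2$ for $n\geq 2$ and $\binom{2}{1}=2$), so the constraint $\chi'_{D_{1,2}}\geq\chi'_{1,2}$ gives only the bound $\geq 2$; the stronger lower bound $3$ for odd paths must come from the distinguishing requirement. So I would argue: a symmetric path $\overleftrightarrow{P_n}$ has exactly one nontrivial automorphism, the reflection $\sigma$ swapping the two endpoints. With only $2$ colours, on the arcs along the path the colours along any directed traversal must strictly alternate (no two consecutive same-coloured arcs, i.e. no monochromatic $2$-path), and moreover the two opposite arcs of an edge may share a colour. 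A short case analysis on the forced alternating pattern of the two colours shows that for $\overleftrightarrow{P_{2k}}$ one can break the reflection (the arc sequence read from one end differs from the sequence read from the other), while for $\overleftrightarrow{P_{2k+1}}$ every $2$-colouring respecting type~I is preserved by $\sigma$ (the alternating constraint forces a palindromic-type pattern). Hence $3$ colours are necessary, and a simple explicit $3$-colouring — e.g. colour almost all arcs alternately with two colours but insert a single arc of the third colour near one endpoint — shows $3$ suffices.

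For cycles, I would first recall from Theorem~\ref{PRodl} that $\chi'_{1,2}(\overleftrightarrow{C_n})=2$ when $\chi(C_n)=2$, i.e. $n$ even, and $=3$ when $n$ is odd (since $\chi(C_n)=3$ and $\binom{2}{1}=2<3\leq\binom{3}{1}$). So for odd $n$ the lower bound $3$ is immediate, and I only need a proper type~I $3$-colouring that is also distinguishing; again I would use an asymmetric pattern — the dihedral group $D_n$ acting on $\overleftrightarrow{C_n}$ is easy to kill by making the cyclic sequence of arc colours aperiodic and non-palindromic. For even $n$ the subtle case is showing that $2$ colours cannot distinguish: with $2$ colours the no-monochromatic-$2$-path condition forces each directed traversal of the cycle to have strictly alternating colours, which for even $n$ is consistent and essentially unique up to swapping the two colours, so the colouring is preserved by a rotation (or the whole dihedral group), hence not distinguishing. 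Therefore $\chi'_{D_{1,2}}(\overleftrightarrow{C_n})\geq 3$ for all $n\geq 3$, and a suitable aperiodic $3$-colouring gives the matching upper bound.

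The main obstacle, and where the argument needs the most care, is the precise claim that with only $2$ colours every type~I arc-colouring of $\overleftrightarrow{P_{2k+1}}$ (respectively of an even cycle) admits a nontrivial colour-preserving automorphism. This requires pinning down exactly how much freedom two colours leave: along a directed path the out-arc colours strictly alternate, but the colour of the reverse arc of each edge is only constrained relative to its own neighbours, so one must check all the local patterns $\{c(\overrightarrow{u_iu_{i+1}}), c(\overrightarrow{u_{i+1}u_i})\}$ and verify the reflection (or rotation) indeed preserves the full arc-colouring, not merely the underlying pattern. I expect this to reduce to one or two small parity/palindrome lemmas that can be dispatched by inspection, and the explicit $3$-colourings in the upper-bound direction are routine once the lower bounds are in place.
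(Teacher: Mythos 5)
Your overall strategy (lower bounds via uniqueness of the type-I $2$-colouring up to a colour swap, upper bounds via an explicit modification using a third colour) is the same as the paper's, but there is a genuine error in the key step. You assert that in a proper type-I arc-colouring ``the two opposite arcs of an edge may share a colour.'' They may not: type I forbids monochromatic $2$-cycles as well as monochromatic $2$-paths (the index is $\chi'_{1,2}$, i.e.\ both $A_1$ and $A_2$ are forbidden; in the condition that consecutive arcs $\overrightarrow{uv},\overrightarrow{vw}$ receive distinct colours one allows $w=u$). This is not a cosmetic slip. Under your relaxed reading, the palindrome claim you rely on for odd paths is simply false: Section~\ref{Slast} of the paper shows $\chi'_{D_2}(\overleftrightarrow{P_{2k+1}})=2$, i.e.\ when only monochromatic $2$-paths are forbidden, two colours \emph{do} distinguish an odd path (colour every second $2$-cycle monochromatically). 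So the ``short case analysis'' you defer to would either be incorrect or would refute your own conclusion that three colours are necessary for $\overleftrightarrow{P_{2k+1}}$; the lower bound of $3$ genuinely depends on the $2$-cycle condition you discarded.

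With the correct definition the argument closes immediately and coincides with the paper's. At each internal (degree-$2$) vertex $v$, every ingoing arc must differ in colour from every outgoing arc, so with two colours both arcs ingoing to $v$ get one colour and both outgoing arcs the other; propagating from an endpoint, the type-I $2$-colouring of $\overleftrightarrow{P_n}$ (and of $\overleftrightarrow{C_{2k}}$) is unique up to swapping colours, with $c(\overrightarrow{uv})$ determined by the bipartition class of $v$. The reflection of $P_{2k+1}$ preserves the classes of the two endpoints and hence the colouring, while for $P_{2k}$ it swaps them, which is exactly the paper's one-line argument that $c(\overrightarrow{v_1v_2})\neq c(\overrightarrow{v_nv_{n-1}})$. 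Your upper-bound constructions (a third colour on one or two arcs near an endpoint, an aperiodic pattern on a cycle) are fine in spirit, but you should verify that the recoloured arcs still satisfy the $2$-cycle condition and that the modification fixes a vertex, from which all others are fixed.
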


\begin{proof}
For a path $\overleftrightarrow{P_n}$, there is a proper arc-colouring of type I with two colours, which is unique up to the swapping of colours. The only non-trivial automorphism of $\overleftrightarrow{P_{2k}}$, a reflection, does not preserve this colouring since $c(\overrightarrow{v_1v_2}) \neq c(\overrightarrow{v_nv_{n-1}})$.
For $\overleftrightarrow{P_{2k+1}}$, we need a third colour for one arc, say $\overrightarrow{v_1v_2}$. The vertex $v_1$ is thus fixed, and consequently, all vertices are fixed.

For even cycles $\overleftrightarrow{C_{2k}}$, there is exactly one proper colouring of type I with two colours up to the swapping of colours. This colouring is not distinguishing. We change the colour of the arc $\overrightarrow{v_1v_2}$ to a~third colour. The vertices $v_1$ and $v_2$ are thus fixed and, consequently, all vertices are fixed, too.

For odd cycles, we have $\chi'_{1,2}(\overleftrightarrow{C_{2k+1}})=3$, and there exists a proper colouring $c$ with three colours, where only two arcs $\overrightarrow{v_nv_1}$, $\overrightarrow{v_2v_1}$ have a third colour. The vertices $v_1$, $v_2$ and $v_n$ are fixed, because they have unique sets of colours of ingoing arcs. Consequently, the colouring is distinguishing.
\end{proof}

The main result of this paper is the following optimal upper bound for the distinguishing chromatic index of type I for any connected symmetric digraph.

\begin{theorem} \label{thm:main}
If $G$ is a connected graph of maximum degree $\Delta$, then $$\chi'_{D_{1,2}}(\overleftrightarrow{G}) \leq \ceil[\Big]{2 \sqrt{\Delta}}.$$
\end{theorem}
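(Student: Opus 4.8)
The plan is to start from an optimal proper arc-colouring of type I of $\fsG$ using exactly $k_0 = \chi'_{1,2}(\fsG)$ colours, and then, if necessary, add one more colour to break all colour-preserving automorphisms. The point is that by Theorem~\ref{PRodl} and the bound $\chi(G)\le\Delta+1$, the optimal number $k_0$ of colours satisfies $\binom{k_0}{\lfloor k_0/2\rfloor}\ge\chi(G)$, and comparing with $\binom{k}{\lfloor k/2\rfloor}$ one checks that $k_0\le\lceil 2\sqrt{\Delta}\rceil - 1$ for all but finitely many small values of $\Delta$ (the central binomial coefficient grows roughly like $2^k/\sqrt k$, so it overtakes $\Delta+1$ well before $k$ reaches $2\sqrt\Delta$). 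Thus in the generic case we have one ``spare'' colour available, and the whole task reduces to a local recolouring argument that fixes every vertex; the finitely many small cases (and the graphs $K_4,C_4,C_6,K_{3,3}$ where $\chi'_D$ misbehaves, as well as paths and cycles) are handled directly, using Observation~\ref{cycles} and Proposition~\ref{chi12} where relevant.

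For the main argument I would pick a spanning tree $T$ of $G$ rooted at a vertex $r$, and proceed to fix vertices level by level outward from $r$. The mechanism is: at a vertex $v$ that is already fixed, its incident arcs in $\fsG$ fall into colour classes; to distinguish the children $u_1,\dots,u_d$ of $v$ in $T$ (and more generally all neighbours of $v$), I want the arcs $\overrightarrow{vu_i}$ to receive pairwise distinct colours, or failing that, to use the extra $(k_0{+}1)$-st colour on a judicious subset of arcs around $v$ so that the neighbours acquire distinguishable ``signatures'' (the pair consisting of the colour of the arc into them together with the colour of the arc back). A cleaner variant: recolour a small set of arcs with the new colour so that each newly-fixed vertex is the unique vertex with a prescribed incoming/outgoing colour pattern from an already-fixed vertex. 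Since the proper-type-I constraint only forbids monochromatic $2$-cycles and monochromatic $2$-paths, changing the colour of a single arc $\overrightarrow{vu}$ to the fresh colour $k_0+1$ can never create a forbidden configuration (the fresh colour appears on only that one arc locally), so such recolourings are always legal; this is what makes the ``+1 colour'' robust.

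The main obstacle I anticipate is the bookkeeping of how many arcs around a single vertex $v$ need the fresh colour in order to separate all of its neighbours: a priori, if $v$ has degree close to $\Delta$ and the colouring $c$ assigns repeated colours to the arcs leaving $v$, one might need to spend the fresh colour on up to $\Delta/2$ of them, and one has to check this does not cascade into needing yet more colours elsewhere. The resolution should exploit that we have far more than one slack colour for large $\Delta$ — indeed $k_0$ is only about $\log_2\Delta$, so $\lceil 2\sqrt\Delta\rceil - k_0 \to \infty$ — hence for large $\Delta$ one has an enormous budget of extra colours and the distinguishing step is essentially free; the delicate regime is intermediate $\Delta$, where $k_0$ is within one or two of $\lceil 2\sqrt\Delta\rceil$, and there I would argue that $G$ cannot be too symmetric near a maximum-degree vertex (or handle those ranges of $\Delta$ by an explicit finite check combined with the tree-recolouring scheme using exactly one extra colour). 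A secondary technical point is ensuring the root $r$ itself gets fixed and that leaves of $T$ with degree $1$ in $G$ do not cause trouble — these are dispatched by the same signature idea, or by noting a pendant vertex is fixed as soon as its unique neighbour is.
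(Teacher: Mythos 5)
There is a genuine gap. Your starting point --- take an optimal proper type-I colouring with $k_0=\chi'_{1,2}(\fsG)$ colours and perturb it with one fresh colour --- cannot work, and the reason is visible already in the star $\overleftrightarrow{K_{1,\Delta}}$: there $k_0=2$, yet any distinguishing proper colouring needs $\lceil 2\sqrt\Delta\rceil$ colours, because the type-I condition forces the set of colours on arcs into a vertex $w$ to be \emph{disjoint} from the set on arcs out of $w$, so with $k$ colours one gets at most $\lfloor k/2\rfloor\cdot\lceil k/2\rceil$ distinct ordered pairs $(c(\overrightarrow{wu}),c(\overrightarrow{uw}))$ with which to separate the $\Delta$ neighbours. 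You correctly identify this ``signature'' mechanism, but a single fresh colour buys only $O(k)$ new pairs, not the $\Theta(\Delta)$ needed; the whole palette must be deployed around every high-degree vertex. Consequently the real problem is not properness versus $k_0$ (a red herring --- $k_0$ is only logarithmic in $\Delta$) but the global coupling that properness imposes: once the pair on the $2$-cycle $uv$ is chosen, it constrains which colours may appear on \emph{all} other arcs at $u$, and you must show these constraints can be met simultaneously at every vertex while still producing $\Delta$ distinct pairs at each branching point. You name this obstacle (``cascading'') but your resolution --- a large budget of spare colours for large $\Delta$, plus a finite check in an ``intermediate regime'' --- does not engage with it, and the delicate regime is in any case mislocated: it is small $\Delta$, not intermediate $\Delta$.

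What is missing is precisely the paper's mechanism: assign to each vertex $v$ a list $L(v)$ of $\lfloor k/2\rfloor$ admissible ingoing colours, with adjacent vertices receiving distinct lists, and colour every arc by $c(\overrightarrow{uv})\in L(v)\setminus L(u)$; this decouples the propagation problem, and fixing the children of an already-fixed vertex reduces to choosing, for each child $u$, a list containing $c(\overrightarrow{vu})$ and avoiding $c(\overrightarrow{uv})$ that differs from the lists of $u$'s already-listed neighbours. The counting bound $\binom{k-2}{\lfloor k/2\rfloor-1}\ge\Delta+1$ makes this automatic for $\Delta\ge 13$, which confirms your intuition that large $\Delta$ is the easy case --- but the cases $3\le\Delta\le 12$, where that inequality fails, occupy most of the paper's proof and are not dispatched by any ``explicit finite check'' of the kind you gesture at: they require recolouring arguments and structural analysis of the sets $A$ of indistinguishable siblings. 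Your BFS-tree, fix-level-by-level skeleton matches the paper's, but without the list machinery the inductive step has no way to certify that a legal, separating extension exists.
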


The proof of Theorem {\ref{thm:main}} is given in Section {\ref {proof}}.  
Let us now observe that the equality holds for symmetric stars $\overleftrightarrow{K_{1, \Delta}}$, while only two colours are enough in a~proper arc-colouring, i.e. $\chi'_{{1,2}}(\overleftrightarrow{K_{1, \Delta}})=2$ by Theorem \ref{PRodl}. This shows that the difference between the chromatic index $\chi'_{{1,2}}(\overleftrightarrow{G})$ and the distinguishing chromatic index $\chi'_{D_{1,2}}(\overleftrightarrow{G})$ can be arbitrarily large.

\begin{proposition}\label{stars}
For the symmetric directed star $\overleftrightarrow{K_{1,\Delta}}$ we have $$\chi'_{D_{1,2}}(\overleftrightarrow{K_{1,\Delta}}) = \ceil[\Big]{2 \sqrt{\Delta}}. $$
\end{proposition}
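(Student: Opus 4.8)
The plan is to reduce the statement to a purely combinatorial fact about ordered pairs of colours. Write $v_0$ for the centre of $\overleftrightarrow{K_{1,\Delta}}$ and $v_1,\dots,v_\Delta$ for its leaves; for $\Delta\ge 2$ the centre is fixed by every automorphism, so $\mathrm{Aut}(\overleftrightarrow{K_{1,\Delta}})$ is the symmetric group acting on $\{v_1,\dots,v_\Delta\}$ (the case $\Delta=1$ is already covered by Observation~\ref{cycles}). Given an arc-colouring $c$, associate with each leaf $v_i$ the ordered pair $\pi_i=(a_i,b_i)$ where $a_i=c(\overrightarrow{v_0v_i})$ and $b_i=c(\overrightarrow{v_iv_0})$, and set $A=\{a_1,\dots,a_\Delta\}$, $B=\{b_1,\dots,b_\Delta\}$. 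The first step is to record two equivalences. First, $c$ is proper of type~I if and only if $A\cap B=\emptyset$: a monochromatic $2$-cycle means $a_i=b_i$ for some $i$, a monochromatic $2$-path $\overrightarrow{v_iv_0},\overrightarrow{v_0v_j}$ with $i\ne j$ means $b_i=a_j$, and together these say exactly that no colour lies in both $A$ and $B$. Second, a leaf-permutation $\sigma$ preserves $c$ precisely when $\pi_{\sigma(i)}=\pi_i$ for all $i$, so $c$ is distinguishing if and only if $\pi_1,\dots,\pi_\Delta$ are pairwise distinct.

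For the lower bound, suppose $c$ is a distinguishing proper colouring of type~I with $k$ colours. By the first equivalence $A$ and $B$ are disjoint, so $|A|+|B|\le k$; by the second, the $\Delta$ distinct pairs $\pi_i$ all lie in $A\times B$, hence $\Delta\le|A|\cdot|B|$. Combining this with the AM--GM inequality, $\Delta\le|A|\cdot|B|\le\tfrac14\big(|A|+|B|\big)^2\le\tfrac{k^2}{4}$, so $k\ge 2\sqrt{\Delta}$, and since $k$ is an integer, $k\ge\ceil[\big]{2\sqrt{\Delta}}$.

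For the upper bound it suffices to construct a distinguishing proper colouring of type~I with exactly $k=\ceil[\big]{2\sqrt{\Delta}}$ colours (alternatively, one could simply invoke Theorem~\ref{thm:main}). Put $p=\floor{k/2}$ and $q=\ceil{k/2}$, so that $p+q=k$ and $p\cdot q=\floor{k^2/4}$; a short parity case-check shows $\floor{k^2/4}\ge\Delta$ for $k=\ceil[\big]{2\sqrt{\Delta}}$ — this is the only point requiring a little care, and it is exactly where the ceiling in the statement originates. Take the disjoint colour sets $A=\{1,\dots,p\}$ and $B=\{p+1,\dots,k\}$, choose $\Delta$ distinct pairs $\pi_1,\dots,\pi_\Delta\in A\times B$ (possible since $|A\times B|=p\cdot q\ge\Delta$), and colour $\overrightarrow{v_0v_i}$ by the first coordinate of $\pi_i$ and $\overrightarrow{v_iv_0}$ by the second. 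Because $A\cap B=\emptyset$, the first equivalence gives that $c$ is proper of type~I, and because the $\pi_i$ are distinct, the second gives that $c$ is distinguishing. Together with the lower bound this yields $\chi'_{D_{1,2}}(\overleftrightarrow{K_{1,\Delta}})=\ceil[\big]{2\sqrt{\Delta}}$. The substantive step is the encoding of a colouring by leaf-pairs; once it is in place, both bounds are immediate apart from the elementary verification that $\floor{k^2/4}\ge\Delta$ when $k=\ceil[\big]{2\sqrt{\Delta}}$.
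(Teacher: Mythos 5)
Your proof is correct and takes essentially the same approach as the paper's: both rest on the observations that the colour sets used on arcs into and out of the centre must be disjoint and that the $\Delta$ leaves must receive pairwise distinct colour pairs, together with the count $\lfloor k/2\rfloor\cdot\lceil k/2\rceil\ge\Delta$ for $k=\lceil 2\sqrt{\Delta}\rceil$. You simply spell out the lower-bound direction (via AM--GM) more explicitly than the paper does.
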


\begin{proof}
Set $k= \ceil[\Big]{2 \sqrt{\Delta}}.$  Let $w$ be a vertex of degree $\Delta$ in $K_{1,\Delta}$.
In a  proper distinguishing colouring, every 2-cycle has to be coloured with a different pair of two colours. Moreover, if a colour $\alpha$ is used on an arc $\overrightarrow{uw}$, then it cannot be used on any arc $\overrightarrow{wv}$, hence the sets of colours for arcs ingoing to $w$  and those outgoing  from $w$ are disjoint. We need $\Delta$ pairs of colours. We use colours $\Big\{1, 2, \ldots, \ceil[\big]{\frac{k}{2}}  \Big\}$ for the arcs from $w$ to $\Delta$ pendant vertices and $\Big\{ \ceil[\big]{\frac{k}{2}}+1, \ceil[\big]{\frac{k}{2}}+2, \ldots, k \Big\}$ colours for the arcs in the opposite direction. It is easy to see that $$\ceil[\bigg]{\frac{k}{2}} \cdot \floor[\bigg]{\frac{k}{2}} \geq \Delta.$$ 
\end{proof}


\section{Proof of Theorem \ref{thm:main}}\label{proof}

If $\Delta(G)=1$, then $G=K_2$, and trivially, $\chi'_{D_{1,2}}(\overleftrightarrow{K_2})=2$. For $\Delta=2$ the claim follows by Observation~\ref{cycles}. Now, we prove Theorem \ref{thm:main} for $\Delta \geq 3$. We will use the terms {\it parent, child} and {\it sibling} with respect to a given BFS tree.

Let $G$ be a connected graph with maximum degree $\Delta\geq 3$. We construct a~distinguishing proper arc-colouring $c$ of the symmetric digraph $\fsG$ using colours from the set $\{1,\ldots,k\}$, where $k=\ceil[\Big]{2 \sqrt{\Delta}}$.

To ensure that $c$ is proper, for each vertex $v$ we assign a~list $L(v)$ of $\lfloor \frac k2\rfloor$ admissible colours for arcs ingoing to $v$, such that adjacent vertices get distinct lists. Then the arcs of $\fsG$ are coloured according to the rule
$${\rm { }} \qquad \qquad \qquad\qquad \qquad \qquad c(\overrightarrow{uv})\in L(v)\setminus L(u)\qquad \qquad \qquad\qquad \qquad \qquad \qquad (\star) $$
for every arc $\overrightarrow{uv}$ of $\fsG$. Consequently, no 2-cycle or 2-path will be monochromatic. 

To ensure that $c$ is distinguishing, we first pick a vertex $w$ of degree $\Delta$, and define its list $L(w)= \bigl\{ \left\lceil \frac k2\right\rceil+1,\ldots,k \bigr\}$. 
The arcs incident to $w$ induce a star, and we colour them as in the proof of Proposition~\ref{stars}. That is, the arcs ingoing to $w$ get colours from $L(w)$, while the arcs outgoing from $w$ get colours from $\bigl\{1,\ldots,\left\lceil \frac k2\right\rceil \bigr\}$ in such a~way that each neighbour $u$ of $w$ gets a~distinct pair of two colours of the arcs $\overrightarrow{uw},\overrightarrow{wu}$. The vertex $w$ will be the only vertex of $\fsG$ with this colouring of incident arcs, so $w$ will be fixed by every automorphism of $\fsG$ preserving our colouring~$c$.

For every neighbour $u$ of $w$, we assign the~list $L(u)=L(w)\cup\{c(\overrightarrow{wu})\}\setminus\{c(\overrightarrow{uw})\}$, which is distinct from the lists of its siblings, and the colours of arcs between $u$ and $w$ satisfy rule $(\star)$. Observe also that all neighbours of $w$ are fixed, since $w$ is fixed.

We further proceed by considering consecutive vertices of $G$ in a BFS ordering rooted at $w$. At each stage of the procedure, every vertex $x$ with an assigned list $L(x)$ is fixed, and all arcs between vertices with assigned lists are coloured according to rule $(\star)$. Each stage begins with finding the first vertex $v$ in the BFS order, which has a child without an assigned list of admissible colours. Let $A$ be a maximal set of children of $v$ that do not have assigned lists, have the same set of neighbours with already assigned lists, have the same degree, and the same set of neighbours with already assigned lists. Recall that all vertices with assigned lists are already fixed. Consequently, an automorphism $\varphi$ of $\fsG$ can move a child of $v$ only onto a child of $v$ within the same set $A$. 

For each such set $A$, we proceed in three steps: 
\begin{enumerate}[itemsep=-12pt, topsep=4pt]
\item We colour the arcs between the vertex $v$ and the set $A$ so that each vertex of $A$ is fixed. \\
\item We assign a list $L(u)$ to every vertex of $A$ which is distinct from $L(w)$ and from the lists of neighbours of $u$. \\
\item According to rule $(\star)$, we colour all uncoloured arcs between vertices of $A$ and vertices with assigned lists of admissible colours.
\end{enumerate}
After completing this procedure for every set $A$ and every vertex $v$ in the BFS ordering, we clearly obtain a proper distinguishing arc-colouring of $\fsG$.

\vspace{4mm}
The arcs between the vertices of $A$ and the vertex $v$ induce a star. We colour them as in the proof of Proposition~\ref{stars}, that is, each pair $(\overrightarrow{vu},\overrightarrow{uv})$ of opposite arcs between $v$ and $u\in A$ gets a distinct pair of colours $(\alpha,\beta)$ such that $\alpha\notin L(v)$ and $\beta\in L(v)$.  Thus, the set~$A$ is also fixed point-wise. 

\vspace{2mm}
Now, we want to assign to every vertex $u\in A$ a suitable list $L(u)$ containing $c(\overrightarrow{vu})$ and  excluding $c(\overrightarrow{uv})$. Additionally, we have to ensure that adjacent vertices get different lists.  Clearly, $|N^L(A)|\leq \Delta,$ and the equality holds only if $A$ is an independent set of vertices of degree $\Delta$. For each $u\in A$, to establish its list $L(u)$, we choose $\lfloor\frac k2\rfloor-1$ colours from the set of $k-2$ colours since we require $c(\overrightarrow{vu})\in L(u)$ and $c(\overrightarrow{uv})\notin L(u)$. The list $L(u)$ has to be different from the lists assigned to vertices of $N^L(A)$ and from the list $L(w)$ of the root $w$. Hence, such a list $L(u)$ exists whenever
 $${k-2\choose \lfloor\frac k2\rfloor-1}\geq \Delta+1.$$
We claim that this inequality holds for each $\Delta\geq 13.$ 
As $k=\ceil[\Big]{2 \sqrt{\Delta}}$, it follows that $\Delta\leq \frac{k^2}{4}$. Thus, it suffices to show that the inequality
\begin{equation}\label{lists}
{k-2\choose \lfloor\frac k2\rfloor-1}\geq \frac{k^2}{4}+1
\end{equation}
holds for each $k\ge 8$. It clearly holds for $k=8$ and $k=9$. For greater values of $k$, we use induction taking into account the parity of $k$. Consider first the case when $k$ is even, and suppose that the inequality is true for some even $k\geq 8.$ We want to prove that it also holds for $k+2$. To this end observe that 
$${(k+2)-2\choose \frac{k+2}{2}-1}= {k-2\choose \frac k2-1}\frac {k(k-1)}{\frac{k^2}{4}}\ge \left(\frac{k^2}4+1\right)\frac {k(k-1)}{\frac{k^2}{4}}\geq k(k-1),$$
where the first inequality follows from the induction hypothesis. 
Thus, it suffices to show that $k(k-1)\ge \frac{(k+2)^2}{4}+1.$ This inequality is equivalent to the quadratic inequality $3k^2-8k-8\ge 0,$ which holds for $k\ge 4$. Analogously, we use induction for odd $k$.

Thus, we can assign lists $L(u)$ for all $u\in A$ when $\Delta\ge 13$ since $\lceil2\sqrt{13}\rceil=8$. Next, we colour all arcs incident to vertices of $A$ according to the rule $(\star)$, and continue the procedure for another set $A$ if it exists for the vertex $u$. Next, we consider a subsequent vertex~$v$ in the BFS ordering that has a child without an assigned list of colours, and repeat the procedure. If such a vertex does not exist, i.e. all vertices have assigned lists, then the obtained arc-colouring is proper and distinguishing, as desired. Consequently, our theorem holds for every graph with maximum degree $\Delta\ge 13$. 

\vspace{7mm}
Suppose now that $\Delta\le 12$, and there is a vertex $u\in A$ such that $c(\overrightarrow{vu})=\alpha$ and $c(\overrightarrow{uv})=\beta$ and all lists containing $\alpha$, but not $\beta$, are already assigned to the vertices of $N^L(A)\cup\{w\}$. There are $\lfloor\frac k2\rfloor(k-\lceil\frac k2\rceil)\geq \Delta$ distinct pairs of colours for pairs of opposite arcs between the parent $v$ with a given list $L(v)$ and its children.
At least one of them, $(\alpha_1,\beta_1)$, is not used, so we can recolour the arcs between $u$ and $v$ setting $c(\overrightarrow{vu})=\alpha_1$ and $c(\overrightarrow{uv})=\beta_1$. The number of lists $L$ such that $\{\alpha,\beta\}\setminus L=\{\beta\}$ is equal to ${k-2\choose \lfloor\frac k2\rfloor-1}$. The same is the number of lists $L$ with $\{\alpha_1,\beta_1\}\setminus L=\{\beta_1\}$. By the inclusion-exclusion principle, the number $m$ of lists $L$ such that $\{\alpha,\beta\}\setminus L=\{\beta\}$ and $\{\alpha_1,\beta_1\}\setminus L=\{\beta_1\}$ equals
$$m=2{k-2\choose \lfloor\frac k2\rfloor-1}-m_1,$$  where $m_1$ is the number of lists $L$ counted twice, that is, 
$$ m_1=\left\{\begin{array}{cll} {k-3\choose \lfloor\frac k2\rfloor-1} & {\rm if} & \{\alpha,\beta\}\cap \{\alpha_1,\beta_1\}=\{\alpha\}, \\ 
{k-3\choose \lfloor\frac k2\rfloor-2} & {\rm if} & \{\alpha,\beta\}\cap \{\alpha_1,\beta_1\}=\{\beta\}, \\ 
{k-4\choose \lfloor\frac k2\rfloor-2} & {\rm if} & \{\alpha,\beta\}\cap \{\alpha_1,\beta_1\}=\{\alpha,\beta\}, \\ 0 & {\rm if} & \{\alpha,\beta\}\cap \{\alpha_1,\beta_1\}=\emptyset. \end{array}
\right.$$
It is easy to verify that $m\ge \Delta+1$ for $\Delta\in\{7,8,10,11,12\}.$ Moreover, it follows that if all lists $L$ with $\{\alpha,\beta\}\setminus L=\{\beta\}$ are already assigned to the vertices of $N^L(A)\cup\{w\}$, then for any pair of colours $(\alpha_1,\beta_1)\neq (\alpha,\beta)$, there is a free list $L$ with $\{\alpha_1, \beta_1\}\setminus L=\{\beta_1\}.$ Therefore, we can assign a suitable list $L(u)$ to every vertex $u$ of $A$, for these values of $\Delta$.

\vspace{4mm}
For $\Delta=9$, we have $k=6$ colours. Since $m\ge 9$, we can find a suitable list $L(u)$ for each vertex $u\in A$ whenever $|N^L(A)|\le 8$. Suppose that $|N^L(A)|=9$ and that $L(w)=\{4,5,6\}$ is the only list available for a certain vertex $u\in A$ with $c(\overrightarrow{vu})=\alpha$ and $c(\overrightarrow{uv})=\beta$. It follows that $\alpha,\alpha_1\in\{4,5,6\}$ and $\beta,\beta_1\in\{1,2,3\}$. Moreover, the pairs $\{\alpha,\beta\}$ and $\{\alpha_1,\beta_1\}$ cannot be disjoint since otherwise we would have 10 free lists for $u$. Suppose first that $\alpha=\alpha_1$. Without loss of generality, we may assume that $\alpha=4,\beta=1$ and $\beta_1=2$. Thus, the eight lists containing 4 and not containing 1 or 2, different from $L(w)$, are assigned to the vertices of $N^L(A)$. In particular, $L(v_1)=\{1,3,4\}$ and $L(v_2)=\{2,3,4\}$ for some $v_1,v_2\in N^L(A)$. We put $L(u)=L(w)$ and colour the pairs of arcs between $u$ and $v_i$ with the same pair of colours $c(\overrightarrow{uv_i})=3, c(\overrightarrow{v_iu})=5$, for $i=1,2$ (recall that each vertex of $A\cup N^L(A)$ is already fixed). Consequently, the vertex $u$ has a colouring of incident arcs different from that of the root $w$, therefore the root $w$ is still fixed. If $\beta=\beta_1$, then we proceed in a similar way.

\vspace{3mm}
For $\Delta=6$, we have $k=5$ colours, and ten 2-element lists. If there is no free list $L(u)$ for some vertex $u\in A$, then we cancel the colouring of arcs between the vertex $v$ and its children. Clearly, $|A|\le 5$. Since $|N^L(A)|\le 6$, there are at least three lists $L_1$, $L_2$, and $L_3$ which are distinct from $L(w)$, which are not assigned to any vertex of $N(A)$ and hence can be used for the vertices of $A$. It is not difficult to verify that for two of them, say $L_1, L_2$, there are lists $L(v_1), L(v_2)$ assigned to some vertices $v_1,v_2\in N^L(A)$ such that $L_i\cap L(v_i)=\emptyset$ for $i=1,2$. If the independence number of $G[A]$ is at least 3, we assign $L_1$ to three independent vertices of $A$. We use distinct pairs of colours for pairs of opposite arcs between $v_1$ and the vertices with $L_1$ (there are four possible such pairs). The other vertices of $A$ get the list $L_2$, and we can colour the arcs between them and $v_1$ with pairs of colours, possibly equal, different from the pairs of colours of arcs between $v_1$ and vertices of $A$ with the list $L_1$. Next, we colour the arcs between $v_2$ and the vertices with $L_2$ with two distinct pairs of colours. As the vertices $v_1$ and $v_2$ are fixed, all the vertices of $A$ are fixed. We colour the remaining arcs of $\fsG[A\cup N^L(A)]$ according to the rule $(\star)$. 

Suppose that the independence number of $G[A]$ is less than 3. Hence $G[A]$ is a clique or $G[A]$ is spanned by two disjoint cliques of orders $n_1$ and $n_2$, where $1\le n_1\le n_2\le 4$. If these two cliques are isomorphic, then $n_1=n_2=2$ and $|A|=4.$ Hence $|N^L(A)|\le 5$, so there are four free lists. We assign them to the vertices of $A$ and colour the arcs of $\fsG[A\cup N^L(A)]$ in such a way that colours of the arcs between the parent $v$ and the two cliques are different, thus fixing all vertices of $A$. If $n_1< n_2\leq4$ or $A$ induces a clique of order $n_2$, then $|N^L(A)|\le 5-n_2+1\leq 4$ and we have at least five free lists. Finally, we colour all arcs of $\fsG[A\cup N^L(A)]$ following the rule ($\star$).

\vspace{2mm}
As we have shown that there exists a proper distinguishing colouring of $\fsG$ for $\Delta = 6$, it also always exists for $\Delta = 5$, because we have the same number $k=5$ of colours, and there are no more vertices in $A$ and $N^L(A)$. 

\vspace{3mm}

For $\Delta=4$, we have $k=4$ colours, and six 2-element lists. Clearly, $|A|\le 3$. Suppose that for some vertex $u\in A$, there is no free list compatible with colours of the arcs between $u$ and its parent $v$. In such a case, we remove colours between $v$ and all its children in $A$. 

If $A\cup N^L(A)$ has at most five vertices, then we can assign a distinct list $L(u)\neq L(w)$ to every vertex $u\in A$, and colour the arcs in $\fsG[A\cup N^L(A)]$ according to rule $(\star)$ so that each vertex of $A$ is fixed. The same holds if the number of distinct lists assigned to vertices in $N^L(A)$ is at most two. If there is an edge joining two vertices of $A$ (there may exist only one such edge), then $|N^L(A)|\le 3$, and two lists suffice for $A$, even if $|A|=3$. 

\vspace{4mm}


\begin{figure}[htb]
\centering

\includegraphics[width=8cm]{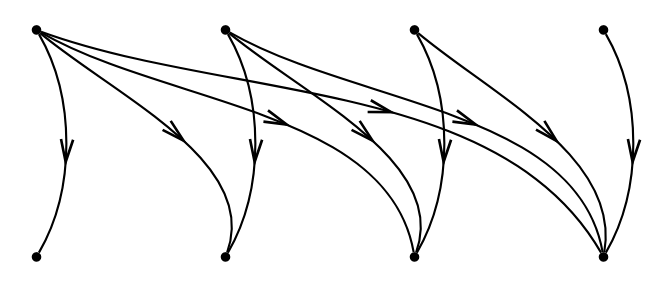}

\caption{Proper distinguishing colouring of $\overleftrightarrow{K_{4,4}}$ with three colours: only the arcs coloured with the third colour are drawn, all remaining arcs directed downwards are coloured with the first colour, and all arcs upwards with the second colour}\label{K44}
\end{figure}


Then suppose that $|A\cup N^L(A)|\geq 6$ and $A$ is an independent set. If $N^L(A)$ has four vertices with a common parent, then it must be the root $w$.  If $|A|=3$, then $G=K_{4,4}$ and a distinguishing proper colouring with three colours is presented in Figure~\ref{K44}. If $|A|=2$, then the subgraph $G[\{w\}\cup N^L(A)\cup A]$  is isomorphic to $K_{3,4}$, possibly with an edge or two independent edges. It is easy to find a proper distinguishing colouring of this subgraph using the fourth colour on one arc of each additional edge. 

Suppose now that there are at least two vertices of $N^L(A)$ with distinct parents.
If there is a vertex $v'\in N^L(A)$ such that $\overline{L(v')}$ is not a list of $w$ and of any other vertex in $N^L(A)$, then we assign the list $\overline{L(v')}$ to every vertex of $A$, and colour the arcs between them and $v'$ differently, thus fixing all vertices of $A$. Next, we properly colour the remaining arcs between $A$ and $N^L(A)$.

Hence, we are left with the situation, when 
\begin {enumerate}
\item $|A\cup N^L(A)|\geq 6$,
\item the vertices of $N^L(A)$ have at least three distinct lists,
\item every parent of a vertex in $N^L(A)$ has at most three children, 
\item for each vertex of $N^L(A)$ with a list $L$, the complement $\overline{L}$ is assigned to another vertex of $N^L(A)$ or  $\overline{L}=L(w)$.
\end{enumerate}
It follows that there exists a vertex $v_1\in N^L(A)$ such that $L(v_1)\ne \overline{L(w)}$ is unique in $N^L(A)$. Let $v'$ be a vertex of $N^L(A)$ with $L(v')=\overline{L(v_1)}$. We will change the list $L(v_1)$ so that $\overline{L(v')}$ will not be a list of any vertex in $N^L(A)\cup \{w\}$. 
To this end, first observe that for any pair of colours $(c(\overrightarrow{v_1x}),c(\overrightarrow{xv_1}))$, where $x$ is a parent of $v_1\in N^L(A)$, there are two possible lists for $v_1$. If the other list is different from $L(w)$, then we put the list $L(v')$ on the vertices of $A$ and proceed as above. Otherwise, given a list $L(x)$ we have four distinct pairs of colours between $x$ and its children (their number is at most three). Hence, we can change the pair of colours between $x$ and $v_1$ to obtain another possible list for $v_1$. This can be clearly done when $x$ is the only parent of $v_1$. 

If $v_1$ has another parent $x'$, then $|A|=2$ and $v_1$ has no children outside $A$. Hence, the four vertices in $N^L(A)$ have distinct lists that are pairwise complementary, because otherwise, we will have two free lists for vertices in $A$. Thus $\overline{L(w)}$ cannot be a list of any vertex in $N^L(A)$. It is easy to check that for any pair of lists $L(x),L(x')$ and a common child of $x,x'$, there are at least four distinct possible colourings of arcs on the path $xv_1x'$ of length two, when $L(x)\cap L(x')=\emptyset$, and even more when $L(x)\cap L(x')\ne\emptyset$. The number of common children of $x$ and $x'$ is at most three, so we can change the colours of arcs between $v_1$ and its two parents $x,x'$ to get a new list $L(v_1)$. If this new list is different from $L(w)$, then we are done. Otherwise, we let $L(v_1)=L(w)$, and colour the arcs between $v_1$ and $u_1,u_2\in A$ with distinct pairs of colours putting $(c(\overrightarrow{v_1u_1}),c(\overrightarrow{u_1v_1}))=
(c(\overrightarrow{v_1x}),c(\overrightarrow{xv_1}))$, thus fixing the vertices of $A$. We assign lists for the vertices $u_1$ and $u_2$.
Then we colour the other arcs between $A$ and $N^L(A)$ properly. Note that the root $w$ is still fixed since $w$ and~$v_1$ have different colourings of incident arcs.

\vspace{2mm}
As there always exists a proper distinguishing colouring of $\fsG$ for $\Delta = 4$, it also exists for $\Delta = 3$, because we have the same number $k=4$ of colours, and the numbers of vertices in sets $A$ and $N^L(A)$  are not greater. This completes the proof of Theorem~\ref{thm:main}.


\section{Forbidden monochromatic 2-paths only}\label{Slast}

In this section, we study proper arc-colourings, where only monochromatic 2-paths are forbidden.  To our knowledge, such colourings have not been investigated in the literature yet. Therefore, we first consider the chromatic index $\chi'_{2}(\overleftrightarrow{G})$. Clearly, for every symmetric digraph $\overleftrightarrow{G}$ we have
\begin{equation}\label{212}
\chi'_{2}(\overleftrightarrow{G}) \leq \chi'_{1,2}(\overleftrightarrow{G}).
\end{equation}

First, observe that the equality holds for bipartite symmetric digraphs of order at least 3.

\begin{observation}
For all connected bipartite symmetric digraphs $\overleftrightarrow{G}$, except for $\overleftrightarrow{K_2}$, it  holds $$ \chi'_{2}(\overleftrightarrow{G}) = \chi'_{1,2}(\overleftrightarrow{G}). $$
\end{observation}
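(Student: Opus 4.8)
The plan is to use the already-established inequality \eqref{212}, so that it suffices to prove the reverse estimate $\chi'_{2}(\overleftrightarrow{G})\geq\chi'_{1,2}(\overleftrightarrow{G})$. First I would pin down the right-hand side. Since $G$ is connected and bipartite with $|G|\geq 3$, it contains at least two edges, so $\chi(G)=2$. Because $\binom{1}{0}=1<2$ while $\binom{2}{1}=2\geq\chi(G)$, Theorem~\ref{PRodl} yields $\chi'_{1,2}(\overleftrightarrow{G})=2$.

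It then remains to show that one colour never suffices for a proper arc-colouring of type~$2$, i.e.\ $\chi'_{2}(\overleftrightarrow{G})\geq 2$. If every arc of $\overleftrightarrow{G}$ received the same colour, then, since a connected graph on at least three vertices has a vertex $v$ of degree at least $2$ (equivalently, contains $P_3$ as a subgraph), we could pick two distinct neighbours $u,w$ of $v$, and the arcs $\overrightarrow{uv}$, $\overrightarrow{vw}$ would form a monochromatic $2$-path, contradicting properness. Hence $\chi'_{2}(\overleftrightarrow{G})\geq 2$, and together with \eqref{212} this gives $\chi'_{2}(\overleftrightarrow{G})=2=\chi'_{1,2}(\overleftrightarrow{G})$.

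There is no substantial obstacle here; the only point requiring a moment of care is the excluded boundary case. For $\overleftrightarrow{K_2}$ there is no $2$-path at all, so a single colour is admissible and $\chi'_{2}(\overleftrightarrow{K_2})=1$, whereas the unique $2$-cycle forces $\chi'_{1,2}(\overleftrightarrow{K_2})=2$; this is precisely why $\overleftrightarrow{K_2}$ must be removed from the statement.
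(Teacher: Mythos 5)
Your proposal is correct and follows essentially the same route as the paper: both obtain $\chi'_{1,2}(\overleftrightarrow{G})=2$ from Theorem~\ref{PRodl} and then rule out a one-colour type-$2$ colouring by observing that any vertex of degree at least $2$ would yield a monochromatic $2$-path, which for connected $G$ leaves only $K_2$ as an exception. Your write-up merely spells out the binomial computation and the $K_2$ boundary case a little more explicitly than the paper does.
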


\begin{proof}
By Theorem \ref{PRodl}, $\chi'_{1,2}(\overleftrightarrow{G})=2$ for any bipartite graph $G$. If $\overleftrightarrow{G}$ is a~connected symmetric digraph with $\chi'_{2}(\overleftrightarrow{G})=1$, then $\Delta(G)=1$ because monochromatic 2-paths are forbidden.
Consequently, either $\chi'_{2}(\overleftrightarrow{G})=2$  or $G \cong K_2$ if $\overleftrightarrow{G}$ is connected.
\end{proof}

On the other hand, let us show now that infinitely many graphs fulfill a strict inequality in~(\ref{212}).
\begin{proposition}\label{pelne2}
Let $l\geq 2$ be any integer and let $n={2l+1\choose l}+1$. Then   $$\chi'_2(\overleftrightarrow{K_n})<\chi'_{1,2}(\overleftrightarrow{K_n}).$$
\end{proposition}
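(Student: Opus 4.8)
The plan is to compare $\chi'_{1,2}(\overleftrightarrow{K_n})$ with $\chi'_2(\overleftrightarrow{K_n})$ directly. For the former I would invoke Theorem~\ref{PRodl}: since $\chi(K_n)=n$, we have $\chi'_{1,2}(\overleftrightarrow{K_n})=\min\{k:\ n\le\binom{k}{\lfloor k/2\rfloor}\}$. Because $\binom{2l+1}{l}=n-1<n$ while $\binom{2l+2}{l+1}=2\binom{2l+1}{l}=2(n-1)\ge n$, this minimum equals $2l+2$. Hence it remains to produce a proper arc-colouring of $\overleftrightarrow{K_n}$ using only $k:=2l+1$ colours in which no $2$-path is monochromatic; this yields $\chi'_2(\overleftrightarrow{K_n})\le 2l+1<2l+2=\chi'_{1,2}(\overleftrightarrow{K_n})$.

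To construct such a colouring I would encode vertices by subsets of $[k]=\{1,\dots,2l+1\}$. Assign to $\binom{2l}{l}$ of the vertices, bijectively, the $l$-element subsets of $[2l]$ (the \emph{small} vertices), and to each of the remaining $n-\binom{2l}{l}=\binom{2l}{l-1}+1$ vertices a distinct set $R\cup\{2l+1\}$ with $R\in\binom{[2l]}{l}$ (the \emph{large} vertices); this is possible since $\binom{2l}{l}-\binom{2l}{l-1}\ge 1$. Write $S(v)$ for the set of $v$. Two assigned sets are comparable exactly when a small vertex $u$ with $S(u)=R$ and a large vertex $v$ with $S(v)=R\cup\{2l+1\}$ use the same $R$; call such a pair \emph{twinned}. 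The colouring rule is: for each twinned pair set $c(\overrightarrow{uv})=c(\overrightarrow{vu})=2l+1$ (a monochromatic $2$-cycle, which type-$2$ colourings permit), and for every other ordered pair colour $\overrightarrow{pq}$ by some element of $S(q)\setminus S(p)$ (nonempty, the sets being incomparable), with the single proviso that the colour $2l+1$ is avoided on every arc leaving a twinned small vertex towards a vertex other than its twin, and on every arc entering a twinned large vertex from a small vertex. I would then check that these restricted colour lists stay nonempty --- in each affected case the list contains a set of the form $R'\setminus R$ with distinct $l$-sets $R,R'$.

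Next I would verify that no $2$-path $\overrightarrow{pq},\overrightarrow{qr}$ with $p,q,r$ pairwise distinct is monochromatic, splitting on how many of the two arcs are ``twin arcs'' (arcs of a twinned pair). If neither is, then $c(\overrightarrow{pq})\in S(q)$ while $c(\overrightarrow{qr})\in S(r)\setminus S(q)$, so the colours differ. If $\overrightarrow{pq}$ is a twin arc its colour is $2l+1$, and $q$ belongs to that twinned pair: if $q$ is the small twin then $r$ is not its twin and the proviso gives $c(\overrightarrow{qr})\ne 2l+1$; if $q$ is the large twin then $2l+1\in S(q)$ forces $c(\overrightarrow{qr})\in S(r)\setminus S(q)$ to avoid $2l+1$. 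The case that $\overrightarrow{qr}$ is a twin arc is symmetric (using $S(q)\subseteq[2l]$ when $q$ is the small twin, and the proviso on incoming arcs when $q$ is the large twin). Both arcs cannot be twin arcs simultaneously, since each vertex lies in at most one twinned pair. This completes the argument.

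The part I expect to be the real obstacle is finding the construction rather than verifying it. The obvious idea of keeping the $\binom{2l+1}{l}$ middle-layer sets and adding one exceptional vertex $v_0$ does not work: the set of colours on arcs leaving $v_0$ would have to meet every $l$-subset (hence have size at least $l+2$) while the disjoint set of colours on arcs entering $v_0$ would have to be contained in no $l$-subset (hence have size at least $l+1$), and $(l+2)+(l+1)>2l+1$. The twinning device sidesteps this by trading the exceptional vertex for $\binom{2l}{l-1}+1$ large vertices and paying for each resulting comparability with a monochromatic $2$-cycle on the single colour $2l+1$ --- forbidden for type-I colourings but allowed here. After that, the remaining work is the somewhat fiddly but routine bookkeeping that the proviso is simultaneously satisfiable and sufficient, as sketched above.
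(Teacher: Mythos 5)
Your proof is correct, but it reaches the bound $\chi'_2(\overleftrightarrow{K_n})\le 2l+1$ by a genuinely different construction than the paper. The paper's route is a quotient argument: it partitions $V(\overleftrightarrow{K_n})$ into $\lfloor n/2\rfloor$ pairs (plus possibly a singleton), spends one dedicated colour on all the intra-pair $2$-cycles, contracts each pair to a single vertex, and lifts a proper type-I colouring of $\overleftrightarrow{K_{\lceil n/2\rceil}}$ back to $\overleftrightarrow{K_n}$; the only real work is the numerical check $\binom{2l}{l}\ge\lceil n/2\rceil$, which gives $\chi'_{1,2}(\overleftrightarrow{K_{\lceil n/2\rceil}})\le 2l$ and hence the total $2l+1$. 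Your route instead works directly inside the Poljak--R\"odl set-system encoding: you take an ``almost-antichain'' in $2^{[2l+1]}$ consisting of all $l$-subsets of $[2l]$ together with $\binom{2l}{l-1}+1$ sets of the form $R\cup\{2l+1\}$, pay for each of the resulting comparabilities with a monochromatic $2$-cycle in the reserved colour $2l+1$, and verify that the proviso on where $2l+1$ may appear keeps all $2$-paths bichromatic. I checked your case analysis and the nonemptiness of the restricted lists ($R_q\setminus R_u\ne\emptyset$ for distinct $l$-sets), and it goes through; note that since every $l$-subset of $[2l]$ is used as a small set, every large vertex is in fact twinned, which makes your two provisos slightly redundant but harmless. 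The trade-off: the paper's argument is shorter and more modular (and its pairing construction is reused almost verbatim to prove the analogous strict inequality for the distinguishing index $\chi'_{D_2}$ later in the paper), while yours is self-contained at the level of the set encoding and makes explicit exactly where and why monochromatic $2$-cycles are needed, including a correct explanation of why the naive ``add one vertex to the middle layer'' idea cannot work.
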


\begin{proof}
Let $n={{2l+1}\choose {l}}+1$ for $l\geq 2$. It follows from Theorem~\ref{PRodl} that $\chi'_{1,2}(\overleftrightarrow{K_n})=2l+2$.

We divide the set of vertices of $\overleftrightarrow{K_n}$ into pairs $M_i=\{u_i,v_i\}, i=1,\ldots,\lfloor \frac n2\rfloor$, and, possibly, a~single vertex $M_{\lceil \frac n2\rceil}=\{u_n\}$ if $n$ is odd. For each $i=1,\ldots, \lfloor \frac n2\rfloor$, we colour the arcs $\overrightarrow{u_iv_i},\overrightarrow{v_iu_i}$ with one and the same colour, which will no longer be used. Next, consider a complete symmetric digraph $\overleftrightarrow{K_{\lceil \frac n2\rceil}}$ with vertices $M_1,\ldots,M_{\lceil \frac n2\rceil}$, and its proper arc-colouring $c$ of type I with $\chi'_{1,2}(\overleftrightarrow{K_{\lceil \frac n2\rceil}})$ colours, i.e. without monochromatic 2-cycles and 2-paths. In the digraph $\overleftrightarrow{K_n}$, for $i\neq j$, we colour all arcs between the vertices of $M_i$ and $M_j$ with the colour $c(\overrightarrow{M_iM_j})$. It is easy to see that we thus obtain an arc-colouring of $\overleftrightarrow{K_n}$ with $1+\chi'_{1,2}(\overleftrightarrow{K_{\lceil \frac n2\rceil}})$ colours  not creating any monochromatic 2-path. Hence, $\chi'_2(\overleftrightarrow{K_n})\leq 1+\chi'_{1,2}(\overleftrightarrow{K_{\lceil \frac n2\rceil}})$.

Denote $a_k={k\choose \lfloor\frac k2\rfloor}$ for $k\geq 3$, and observe that $n-1={2l+1\choose l}=a_{2l+1}$. Moreover, $a_k\geq k$ and
$$\frac{a_{2l+1}}{a_{2l}}= \frac{2l+1}{l+1},$$
for each $k,l$. Consequently,
$$a_{2l}=a_{2l+1}\cdot\frac{l+1}{2l+1}=(n-1)\cdot\frac{l+1}{2l+1}=\frac n2+\frac {n}{2(2l+1)}-\frac{l+1}{2l+1}=$$
$$=\frac n2+\frac{a_{2l+1}-(2l+1)}{2(2l+1)}.$$
Hence, $a_{2l}\geq \lceil\frac n2\rceil$ since $a_{2l+1}\geq 2l+1$. Therefore, $\chi'_{1,2}(\overleftrightarrow{K_{\lceil \frac n2\rceil}})\leq 2l$, and
$$\chi'_2(\overleftrightarrow{K_n})\leq 2l+1<2l+2=\chi'_{1,2}(\overleftrightarrow{K_n}).$$
\end{proof}

\vspace{2mm}

Now, we investigate the distinguishing chromatic index $\chi'_{D_2}(\overleftrightarrow{G})$.
Clearly, for every symmetric digraph $\overleftrightarrow{G}$ it holds
\begin{equation}\label{D2D12}
\chi'_{D_2}(\overleftrightarrow{G}) \leq \chi'_{D_{1,2}}(\overleftrightarrow{G}).
\end{equation}
Hence, Theorem~\ref{thm:main} immediately implies the following.
\begin{proposition}
If $G$ is a connected graph, then 
$$\chi'_{D_2}(\overleftrightarrow{G}) \leq \ceil[\Big]{2 \sqrt{\Delta}}.$$
\end{proposition}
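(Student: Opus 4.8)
The plan is to obtain this statement as an immediate corollary of Theorem~\ref{thm:main} combined with the inequality~(\ref{D2D12}). The only thing to record is the purely logical containment between the two notions of proper colouring: an arc-colouring of $\fsG$ that admits neither a monochromatic $2$-cycle nor a monochromatic $2$-path in particular admits no monochromatic $2$-path. Hence every proper distinguishing arc-colouring of type I is, a fortiori, a proper distinguishing arc-colouring in the weaker sense where only monochromatic $2$-paths are forbidden, so that $\chi'_{D_2}(\fsG)\le\chi'_{D_{1,2}}(\fsG)$, which is exactly inequality~(\ref{D2D12}).

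It then suffices to invoke Theorem~\ref{thm:main}, which asserts $\chi'_{D_{1,2}}(\fsG)\le\ceil[\Big]{2\sqrt{\Delta}}$ for every connected graph $G$ of maximum degree $\Delta$. Composing the two bounds gives
$$\chi'_{D_2}(\fsG)\le\chi'_{D_{1,2}}(\fsG)\le\ceil[\Big]{2\sqrt{\Delta}},$$
which is precisely the assertion.

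There is essentially no obstacle: the argument is a one-line composition of an inclusion of colouring families with the main theorem, and no new combinatorial work is required here. The genuinely substantive companion statement is that this upper bound remains \emph{optimal} for $\chi'_{D_2}$; establishing that requires exhibiting, for infinitely many values of $\Delta$, a connected graph on which $\ceil[\Big]{2\sqrt{\Delta}}$ colours cannot be avoided even when only monochromatic $2$-paths are forbidden, and that extremal construction — rather than the elementary bound above — is where the real effort should be expected.
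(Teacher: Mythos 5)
Your proposal is correct and matches the paper's own argument exactly: the paper likewise notes the trivial inequality $\chi'_{D_2}(\fsG)\leq\chi'_{D_{1,2}}(\fsG)$ and then cites Theorem~\ref{thm:main}. No further comment is needed.
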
 

This bound is optimal because the equality holds for every symmetric directed star $\overleftrightarrow{K_{1,\Delta}}$ with $\Delta \geq 4$, as shown below.

\begin{observation}
If $\Delta\ge 4$, then $\chi'_{D_2}(\overleftrightarrow{K_{1,\Delta}})=\lceil 2\sqrt{\Delta}\rceil$
\end{observation}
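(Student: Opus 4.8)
The plan is to prove the two inequalities separately. For the upper bound $\chi'_{D_2}(\overleftrightarrow{K_{1,\Delta}})\le\lceil 2\sqrt{\Delta}\rceil$, I would simply invoke the preceding Proposition together with inequality~(\ref{D2D12}), or rather observe that the colouring constructed in the proof of Proposition~\ref{stars} already forbids monochromatic $2$-paths (it is a colouring of type~I, hence in particular has no monochromatic $2$-path), and is distinguishing; so the same construction serves here. The substance of the observation is the matching lower bound.

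For the lower bound, let $c$ be a proper arc-colouring of $\overleftrightarrow{K_{1,\Delta}}$ with no monochromatic $2$-path that is distinguishing, and let $w$ be the centre and $v_1,\dots,v_\Delta$ the leaves. The key structural point is that in a star, the only $2$-paths are $\overrightarrow{v_iw}\,\overrightarrow{wv_j}$ with $i\ne j$ and $\overrightarrow{wv_i}\,\overrightarrow{v_iw}$; forbidding monochromatic ones forces, first, that the set $S_{\mathrm{in}}$ of colours appearing on arcs into $w$ is disjoint from the set $S_{\mathrm{out}}$ of colours on arcs out of $w$ (otherwise a colour shared by $\overrightarrow{v_iw}$ and $\overrightarrow{wv_j}$, $i\ne j$, gives a monochromatic $2$-path — here $\Delta\ge 3$ guarantees we can choose $i\ne j$, and when the shared colour occurs on $\overrightarrow{v_iw}$ and $\overrightarrow{wv_i}$ we use $\overrightarrow{wv_i}\,\overrightarrow{v_iw}$ which is also a $2$-path). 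Consequently each leaf $v_i$ receives an ordered pair $(c(\overrightarrow{wv_i}),c(\overrightarrow{v_iw}))$ with first coordinate in $S_{\mathrm{out}}$ and second in $S_{\mathrm{in}}$, and $S_{\mathrm{out}}\cap S_{\mathrm{in}}=\emptyset$.

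Next comes the distinguishing requirement. Since $\mathrm{Aut}(\overleftrightarrow{K_{1,\Delta}})$ is the symmetric group on $\{v_1,\dots,v_\Delta\}$ (the centre is fixed as the unique vertex of degree $\Delta$ for $\Delta\ge 4$, which is why we need $\Delta\ge 4$ — for $\Delta=2,3$ there is no vertex of degree $\Delta$ distinct in degree, but actually the relevant point is that a transposition of two leaves must be ruled out), $c$ is distinguishing if and only if the $\Delta$ ordered pairs assigned to the leaves are pairwise distinct. Writing $a=|S_{\mathrm{out}}|$ and $b=|S_{\mathrm{in}}|$, the number of available pairs is at most $ab$, so we need $ab\ge\Delta$, while the total number of colours used is $a+b$ (since the two sets are disjoint). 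Minimising $a+b$ subject to $ab\ge\Delta$ over positive integers gives $a+b\ge\lceil 2\sqrt{\Delta}\rceil$: indeed $a+b\ge 2\sqrt{ab}\ge 2\sqrt{\Delta}$, and since $a+b$ is an integer, $a+b\ge\lceil 2\sqrt{\Delta}\rceil$. Hence $\chi'_{D_2}(\overleftrightarrow{K_{1,\Delta}})\ge\lceil 2\sqrt{\Delta}\rceil$, matching the upper bound.

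I do not anticipate a serious obstacle; the only point requiring a little care is the case analysis showing $S_{\mathrm{in}}\cap S_{\mathrm{out}}=\emptyset$, specifically handling a colour that appears both on some $\overrightarrow{v_iw}$ and on $\overrightarrow{wv_i}$ — but as noted, $\overrightarrow{wv_i}\,\overrightarrow{v_iw}$ is itself a $2$-path (a directed path of length two returning to $w$), so a monochromatic such configuration is already forbidden, and this closes the gap. One should also remark that $\Delta\ge 4$ is used only to ensure the bound is tight (for $\Delta=3$, $\lceil 2\sqrt3\rceil=4$ but three colours already suffice, cf.\ the discussion after Proposition~\ref{stars} and the relation with $\chi'_{1,2}$); the lower-bound argument above in fact works for all $\Delta\ge 3$ once one checks the pair-disjointness step, and the statement is restricted to $\Delta\ge 4$ to match the star example exactly.
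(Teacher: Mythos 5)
Your upper bound is fine, but the lower bound contains a genuine error. You claim that $\overrightarrow{wv_i}\,\overrightarrow{v_iw}$ "is also a $2$-path", and use this to conclude $S_{\mathrm{in}}\cap S_{\mathrm{out}}=\emptyset$. It is not a $2$-path: in the paper's classification of two-arc digraphs (Figure~\ref{4Ai}) this is the $2$-cycle $A_1$, which is a different configuration from the $2$-path $A_2$ (whose three vertices are distinct), and the index $\chi'_{D_2}$ forbids only monochromatic $2$-paths — monochromatic $2$-cycles are explicitly \emph{allowed} here. (Distinguishing $\chi'_{D_2}$ from $\chi'_{D_{1,2}}$ is the whole point of Section~\ref{Slast}.) So a colour may occur on both $\overrightarrow{v_iw}$ and $\overrightarrow{wv_i}$ for the same $i$, and your disjointness claim, hence your inequality $a+b\ge 2\sqrt{ab}\ge 2\sqrt{\Delta}$, does not follow. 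That the gap is real and not cosmetic is shown by $K_{1,3}$: colouring each of the three $2$-cycles monochromatically with its own colour uses $3$ colours, creates no monochromatic $2$-path, and is distinguishing, so $\chi'_{D_2}(\overleftrightarrow{K_{1,3}})\le 3<4=\lceil 2\sqrt3\rceil$ — yet your argument, which you assert "works for all $\Delta\ge 3$", would prove the opposite. Your closing remark that $\Delta\ge 4$ is needed only "to match the star example exactly" is therefore also wrong; it is needed precisely to handle monochromatic $2$-cycles.

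The argument can be repaired, and the repair is where $\Delta\ge 4$ enters. The paper's route: take an optimal distinguishing colouring with the fewest monochromatic $2$-cycles; if all $\Delta$ $2$-cycles are monochromatic they must carry $\Delta$ pairwise distinct colours (two equal ones would create a monochromatic $2$-path through $w$), and $\Delta\ge\lceil 2\sqrt{\Delta}\rceil$ for $\Delta\ge 4$; otherwise one shows a monochromatic $2$-cycle can be recoloured away without adding colours or creating $2$-paths, reducing to a type~I colouring and hence to Proposition~\ref{stars}. Alternatively, a direct count in your style works: if $t$ leaves carry monochromatic $2$-cycles, each such colour is private to its leaf (any other occurrence yields a monochromatic $2$-path), the remaining $\Delta-t$ leaves need distinct pairs from disjoint sets of sizes $a,b$ with $ab\ge\Delta-t$, and the total $t+a+b\ge t+2\sqrt{\Delta-t}\ge 2\sqrt{\Delta}$ for $0\le t\le\Delta-1$, while $t=\Delta$ gives $\Delta\ge 2\sqrt{\Delta}$ exactly when $\Delta\ge 4$. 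Either way, the case you dismissed is the actual content of the observation.
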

\begin{proof}
Let $w$ be a central vertex of the star $K_{1,\Delta}$.  Consider a proper distinguishing arc-colouring $c$ of $\overleftrightarrow{K_{1,\Delta}}$ with $\chi'_{D_2}(\overleftrightarrow{K_{1,\Delta}})$ colours, without monochromatic 2-paths, and with the least number of monochromatic 2-cycles. If every 2-cycle is monochromatic, then $c$ uses $\Delta$ colours, and $\Delta\geq \lceil 2\sqrt{\Delta}\rceil$ except for $\Delta=3$. Otherwise, there exists a pendant vertex $v$ with $c(\overrightarrow{wv}) \neq c(\overrightarrow{vw})$. If there is a~pendant vertex $u$ with $c(\overrightarrow{wu}) = c(\overrightarrow{uw})$, then we can recolour one of the arcs $\overrightarrow{wu}$ or $\overrightarrow{uw}$ by a colour of $\overrightarrow{wv}$ or $\overrightarrow{vw}$, thus obtaining a distinguishing colouring with the same number of colours, a smaller number of monochromatic 2-cycles and without monochromatic 2-paths. In this way, we can obtain a proper distinguishing arc-colouring with the same number $\chi'_{D_2}(\overleftrightarrow{K_{1,\Delta}})$ of colours and without monochromatic 2-cycles. Hence, $\chi'_{D_2}(\overleftrightarrow{K_{1,\Delta}})=\chi'_{D_{1,2}}(\overleftrightarrow{K_{1,\Delta}})=\lceil 2\sqrt{\Delta}\rceil$, by Proposition~\ref{stars}.
\end{proof}

On the other hand, there exist infinitely many symmetric digraphs $\overleftrightarrow{G}$ with $\chi'_{D_2}(\overleftrightarrow{G}) < \chi'_{D_{1,2}}(\overleftrightarrow{G})$. Simple examples are paths of odd order. Indeed, colouring every second 2-cycle of $\overleftrightarrow{P_{2k+1}}$ with the same colour yields a distinguishing proper arc-colouring. Thus, $\chi'_{D_2}(\overleftrightarrow{P_{2k+1}})=2$ while $\chi'_{D_{1,2}}(\overleftrightarrow{P_{2k+1}})=3$ by Observation~\ref{cycles}. 

Besides, infinitely many complete symmetric digraphs fulfill the strict inequality. 

\begin{proposition}
Let $l\geq 2$ be any integer and let $n={2l+1\choose l}+1$. Then  $$\chi'_{D_2}(\overleftrightarrow{K_n})<\chi'_{D_{1,2}}(\overleftrightarrow{K_n}).$$
\end{proposition}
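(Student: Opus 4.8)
The plan is to prove the equivalent upper bound $\chi'_{D_2}(\overleftrightarrow{K_n})\le 2l+1$, since the right–hand side is $\chi'_{D_{1,2}}(\overleftrightarrow{K_n})=2l+2$. Indeed, by Proposition~\ref{chi12} the latter index equals $\min\{k:n\le\binom{k}{\lfloor k/2\rfloor}\}$, and $\binom{2l+1}{l}<n\le 2\binom{2l+1}{l}=\binom{2l+2}{l+1}$. So it suffices to exhibit a \emph{distinguishing} proper arc-colouring of $\overleftrightarrow{K_n}$ with no monochromatic $2$-path using only $2l+1$ colours. My approach is to take the colouring built in the proof of Proposition~\ref{pelne2} and perturb it, spending no extra colour, so that no non-trivial automorphism survives.

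Recall that construction. One partitions $V(\overleftrightarrow{K_n})$ into $m=\lceil n/2\rceil$ bags $M_1,\dots,M_m$, all of size $2$ except at most one singleton; since $m\le\binom{2l}{l}$, one attaches to the bags distinct $l$-element labels $T_1,\dots,T_m\subseteq\{1,\dots,2l\}$, colours the quotient $\overleftrightarrow{K_m}$ by a Poljak--R\"odl colouring $c_0$ with $c_0(\overrightarrow{M_jM_i})\in T_i\setminus T_j$, colours each arc between $M_i$ and $M_j$ with $c_0(\overrightarrow{M_iM_j})$, and colours the two arcs inside every $2$-bag with the new colour $2l+1$. This is a proper colouring with no monochromatic $2$-path, but it is far from distinguishing: its automorphism group is generated by the transpositions of the two vertices of the $2$-bags, which are twins. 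I expect the main obstacle to be exactly that these transpositions cannot be broken \emph{inside} a bag: the $2l$ colours $1,\dots,2l$ all occur on arcs incident with a given bag and $2l+1$ is the only spare colour, so every recolouring of an inside-bag arc violates properness. Hence the transpositions must all be destroyed by recolouring arcs leaving the bags, and this has to be done simultaneously at all $\lceil n/2\rceil$ bags while keeping the colouring free of monochromatic $2$-paths; carrying out this coordinated perturbation is the crux.

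To do it, in each $2$-bag I would name the two vertices $u_i,v_i$ and pick another bag $M_{j_i}$ with $|T_i\cap T_{j_i}|\le l-2$; such a bag exists because $m=\lceil n/2\rceil>l^{2}+1$, whereas at most $l^{2}+1$ labels meet $T_i$ in $l$ or $l-1$ elements. Then recolour the single arc $\overrightarrow{u_iw}$ from $u_i$ to some $w\in M_{j_i}$ by a colour $\delta_i\in(T_{j_i}\setminus T_i)\setminus\{c_0(\overrightarrow{M_iM_{j_i}})\}$, which is possible as $|T_{j_i}\setminus T_i|\ge 2$. Since after all changes every arc into a vertex of $M_i$ still has colour in $T_i\cup\{2l+1\}$ and every arc out of a vertex of $M_{j_i}$ still has colour in $\overline{T_{j_i}}\cup\{2l+1\}$, a short case check (to be supplied) shows that no monochromatic $2$-path is created; so the new colouring $c'$ is admissible and still uses $2l+1$ colours. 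It remains to see that $c'$ is distinguishing: colour $2l+1$ still occurs only inside $2$-bags, so any automorphism $\varphi$ preserving $c'$ permutes the $2$-bags and fixes the singleton; between any two bags at most one of the parallel arcs was recoloured, so the dominant colour from $M_i$ to $M_j$ is still $c_0(\overrightarrow{M_iM_j})$, whence $\varphi$ induces a $c_0$-automorphism of $\overleftrightarrow{K_m}$, which is trivial by Proposition~\ref{chi12}. Thus $\varphi$ fixes every bag, and inside $M_i$ it is the identity or $(u_i\,v_i)$, the latter being impossible because $c'(\overrightarrow{u_iw})=\delta_i\ne c_0(\overrightarrow{M_iM_{j_i}})=c'(\overrightarrow{v_iw})$; hence $\varphi$ is the identity and $\chi'_{D_2}(\overleftrightarrow{K_n})\le 2l+1<2l+2=\chi'_{D_{1,2}}(\overleftrightarrow{K_n})$. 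The only subtlety I foresee is the case (occurring for instance when $l=2$, where all $\binom{4}{2}$ labels must be used) in which the singleton bag is forced to be some $M_{j_i}$: there one notes that this target bag is the unique $2$-bag whose two arcs to the singleton carry different colours, hence is fixed by $\varphi$, and the argument goes through unchanged.
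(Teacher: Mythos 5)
Your proposal is correct and follows essentially the same route as the paper: reuse the $(2l+1)$-colour construction from Proposition~\ref{pelne2} and then recolour a single inter-pair arc per $2$-bag to break each transposition of $u_i,v_i$, checking that the list invariant still rules out monochromatic $2$-paths. The only (immaterial) differences are that the paper recolours an arc \emph{ingoing} to $u_i$, chosen via the lists $L(M_i)$, and argues directly that each $v_i$ is fixed, whereas you recolour an outgoing arc and pass through the induced automorphism of the quotient $\overleftrightarrow{K_{\lceil n/2\rceil}}$.
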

\begin{proof}
In view of Proposition \ref{chi12} and the proof of Proposition~\ref{pelne2}, it suffices to show that 
$$ \chi'_{D_2}(\overleftrightarrow{K_n}) \leq   \chi'_{1,2}(\overleftrightarrow{K_{\ceil{\frac{n}{2}}}}) + 1$$
for every $n={2l+1\choose l}+1$.

As in the proof of Proposition~\ref{pelne2}, we partition the vertex set of $\overleftrightarrow{K_n}$ into pairs $M_i=\{u_i,v_i\}, i=1,\ldots,\lfloor \frac n2\rfloor$, and, possibly, a single vertex $M_{\lceil \frac n2\rceil}=\{u_n\}$ if $n$ is odd. For each $i=1,\ldots, \lfloor \frac n2\rfloor$, we colour the arcs $\overrightarrow{u_iv_i},\overrightarrow{v_iu_i}$ with one and the same colour, which will no longer be used. Next, we consider a complete symmetric digraph $\overleftrightarrow{K_{\lceil \frac n2\rceil}}$ with vertices $M_1,\ldots,M_{\lceil \frac n2\rceil}$, and produce its arc-colouring $c$ without monochromatic 2-cycles and 2-paths as follows. Let $k=\chi'_{1,2}(\overleftrightarrow{K_{\lceil \frac n2\rceil}})$. That is,
$$k=\min \left\{k'\;:\; \frac n2\leq {k'\choose \lfloor\frac{k'}{2}\rfloor}\right\}.$$
Each vertex $M_i$ gets a distinct list $L(M_i)$ of $\lfloor \frac k2\rfloor$ colours from a fixed set of $k$ colours which are admissible for arcs ingoing to $M_i$. We colour the arc $\overrightarrow{M_iM_j}$ with any colour from $L(M_j)\setminus L(M_i)$. By Proposition~\ref{chi12}, this is a distinguishing colouring of $\overleftrightarrow{K_{\lceil \frac n2\rceil}}$. Then we obtain an arc-colouring of $K_n$ without monochromatic 2-paths by colouring each arc from $M_i$ to $M_j$ with $c(\overrightarrow{M_iM_j})$ for $i\neq j$.

Clearly, this colouring of $\overleftrightarrow{K_n}$ is not distinguishing but it suffices to recolour some arcs to break all transpositions of the vertices $u_i,v_i$, for $i=1,\ldots,\lfloor\frac n2\rfloor$. To do this, for each such $i$, we choose a $j$ such that $L(M_i)\setminus(L(M_j)\cup \{c(\overrightarrow{M_jM_i}\})$ contains a colour, say $\alpha$. We recolour the arc $u_ju_i$ with $\alpha$. Such a $j$ exists for each $i$, by the definition of $k$ since $\frac k2\geq 2$ for $n\geq 7$. This new arc-colouring of $\overleftrightarrow{K_n}$ still has no monochromatic 2-paths, and it is distinguishing because each vertex $v_i$ is fixed. Indeed, if $\varphi$ is an automorphism of $\overleftrightarrow{K_n}$ preserving this colouring, then $v_i$ cannot be mapped by $\varphi$ onto $u_i$ since the multisets of arcs ingoing to $u_i$ and to $v_i$ are distinct. Also, $v_i$ cannot be mapped onto any other vertex because there are no monochromatic 2-paths.
\end{proof}

We conclude this section with a conjecture.

\begin{conjecture}
$$ \chi'_{D_{2}}(\overleftrightarrow{K_{n}}) = \chi'_{2}(\overleftrightarrow{K_{\ceil{\frac{n}{2}}}})+1. $$
\end{conjecture}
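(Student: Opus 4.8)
The plan is to prove the conjectured identity by establishing the two inequalities $\chi'_{D_2}(\overleftrightarrow{K_n}) \le \chi'_2(\overleftrightarrow{K_{\lceil n/2\rceil}})+1$ and $\chi'_{D_2}(\overleftrightarrow{K_n}) \ge \chi'_2(\overleftrightarrow{K_{\lceil n/2\rceil}})+1$ separately. For the upper bound the natural route mimics the proof of Proposition~\ref{pelne2} and of the proposition preceding this conjecture: partition $V(\overleftrightarrow{K_n})$ into pairs $M_i=\{u_i,v_i\}$ (with one singleton if $n$ is odd), reserve one fresh colour~$0$ for the within-pair $2$-cycles, take an optimal proper arc-colouring $c$ of $\overleftrightarrow{K_{\lceil n/2\rceil}}$ with $k=\chi'_2(\overleftrightarrow{K_{\lceil n/2\rceil}})$ colours in which only monochromatic $2$-cycles occur, lift it by colouring the arcs between $M_i$ and $M_j$ using $c(\overrightarrow{M_iM_j})$ and $c(\overrightarrow{M_jM_i})$, and finally recolour a few arcs $\overrightarrow{u_ju_i}$ exactly as in that earlier proof so that every transposition $(u_i\,v_i)$ is broken and each $v_i$ is fixed.

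The delicate point — and the reason one cannot simply reuse the earlier construction, which was forced to spend $\chi'_{1,2}(\overleftrightarrow{K_{\lceil n/2\rceil}})$ colours — is that an optimal type-$2$ colouring $c$ of $\overleftrightarrow{K_{\lceil n/2\rceil}}$ with fewer than $\chi'_{1,2}$ colours necessarily contains monochromatic $2$-cycles, and colouring all eight arcs between such a pair $M_i,M_j$ with the common colour $\gamma=c(\overrightarrow{M_iM_j})=c(\overrightarrow{M_jM_i})$ creates a monochromatic $2$-path $x\to y\to x'$ with $x,x'\in M_i$ and $y\in M_j$. The idea is therefore to distribute $\gamma$ over these arcs in a ``twisted'' fashion: put $\gamma$ only on the two parallel $2$-cycles $u_iu_j$ and $v_iv_j$, and put a carefully chosen colour on the four remaining cross arcs. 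Here one exploits that, since $M_i\leftrightarrow M_j$ is a full monochromatic $2$-cycle of $c$, neither $M_i$ nor $M_j$ meets any other $\gamma$-arc; the remaining freedom (the many admissible colour-pairs, and the flexibility in choosing $c$) should suffice to reroute locally without clashing with the in- and out-colours of $M_i$ and $M_j$, although making this count work in every case is the main labour in this direction.

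For the lower bound I would start from an arbitrary distinguishing proper arc-colouring $c$ of $\overleftrightarrow{K_n}$ with no monochromatic $2$-path and $t$ colours, and aim to contract it to a proper type-$2$ colouring of $\overleftrightarrow{K_{\lceil n/2\rceil}}$ using only $t-1$ colours. The structural input is that in a proper type-$2$ colouring of a complete symmetric digraph each colour class is a disjoint union of $2$-cycles together with at most one complete bipartite ``source/sink'' digraph, so colour classes are rigid; combined with the distinguishing hypothesis this should force a colour class that pairs up almost all vertices into $2$-cycles, and contracting along this matching ought to yield a well-defined proper type-$2$ colouring downstairs that no longer uses that colour. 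The main obstacle is precisely this step: it is not clear that a distinguishing type-$2$ colouring must contain a near-perfect monochromatic $2$-cycle matching, nor that the quotient colouring never reuses the matching colour, so ruling out an economical distinguishing colouring with only $\chi'_2(\overleftrightarrow{K_{\lceil n/2\rceil}})$ colours seems to require a sharper handle on $\chi'_2(\overleftrightarrow{K_m})$ itself — for which no closed formula analogous to Theorem~\ref{PRodl} is presently available, and obtaining one is likely a prerequisite for settling the conjecture.
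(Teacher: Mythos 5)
This statement is left as an open conjecture in the paper --- no proof is given there --- so the only question is whether your proposal actually settles it, and by your own account it does not: both directions of your plan terminate in acknowledged obstacles rather than arguments. Concretely, for the upper bound $\chi'_{D_2}(\overleftrightarrow{K_n}) \le \chi'_2(\overleftrightarrow{K_{\lceil n/2\rceil}})+1$, you correctly observe that the paper's lifting construction only yields the weaker bound with $\chi'_{1,2}(\overleftrightarrow{K_{\lceil n/2\rceil}})$ in place of $\chi'_2$, precisely because an optimal type-$2$ colouring of the quotient may contain monochromatic $2$-cycles, and lifting such a $2$-cycle naively produces a monochromatic $2$-path $u_i\to u_j\to v_i$. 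Your ``twisted'' repair (putting $\gamma$ only on the parallel $2$-cycles and some other colour on the four cross arcs) is a plausible idea, but the cross-arc colour must simultaneously avoid all colours on arcs leaving $M_j$ toward third pairs and all colours on arcs entering $M_i$ from third pairs, and you give no counting argument that such a colour exists among the $k$ available ones; without that, even the easier inequality is unproved.

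The lower bound is in worse shape. Your structural claim that in a proper type-$2$ colouring of a complete symmetric digraph each colour class is ``a disjoint union of $2$-cycles together with at most one complete bipartite source/sink digraph'' is asserted, not proved, and the subsequent steps --- that a distinguishing colouring must contain a near-perfect monochromatic $2$-cycle matching, and that contracting along it yields a proper type-$2$ colouring of $\overleftrightarrow{K_{\lceil n/2\rceil}}$ that omits the matching colour --- are each flagged by you as unclear. You are right that a closed formula for $\chi'_2(\overleftrightarrow{K_m})$ (an analogue of Theorem~\ref{PRodl}) appears to be a prerequisite, and no such formula is established in the paper or in your proposal. So what you have is a reasonable research programme consistent with the paper's constructions, not a proof; the conjecture remains open.
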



\end{document}